\documentclass[letterpaper, 10 pt, conference]{ieeeconf}

\IEEEoverridecommandlockouts
\overrideIEEEmargins

\setlength{\textfloatsep}{11pt plus 1.0pt minus 10.0pt}
\setlength{\floatsep}{11pt plus 1.0pt minus 10.0pt}

\usepackage{cite}

\usepackage{amsthm,amsmath,amssymb,amsfonts}
\usepackage{bm}
\usepackage{algorithm}
\usepackage{algpseudocode}

\usepackage{graphicx}
\usepackage{xcolor}
\usepackage{hyperref}

\usepackage{array}
\usepackage{multirow}

\usepackage[normalem]{ulem}

\makeatletter
\let\MYcaption\@makecaption
\makeatother

\usepackage[font=footnotesize]{subcaption}

\makeatletter
\let\@makecaption\MYcaption
\makeatother

\usepackage{textcomp}
\def\BibTeX{{\rm B\kern-.05em{\sc i\kern-.025em b}\kern-.08em
T\kern-.1667em\lower.7ex\hbox{E}\kern-.125emX}}

\usepackage{lipsum}

\makeatletter
\makeatother

\theoremstyle{plain}
\newtheorem{theorem}{Theorem}
\newtheorem{lemma}{Lemma}

\theoremstyle{definition}

\newtheorem{definition}{Definition}
\newtheorem{assumption}{Assumption}
\newtheorem{remark}{Remark}

\def\({\left(}
\def\){\right)}
\def\[{\left[}
\def\]{\right]}

\newif\ifshowWriterComment
\newcommand\writercomment[3]{\expandafter}

\DeclareMathOperator*{\argmin}{argmin}

\def\alg#1{Algorithm~\ref{alg:#1}}
\def\fig#1{Fig.~\ref{fig:#1}}
\def\lem#1{Lemma~\ref{lem:#1}}
\def\sec#1{Section~\ref{sec:#1}}
\def\tab#1{Table~\ref{tab:#1}}
\def\thm#1{Theorem~\ref{thm:#1}}
\def\eqn#1{\eqref{eqn:#1}}

\def\st{{\rm s.t.}}

\makeatletter
\makeatother
\newcommand{\OptMinStar}[3][]{
\ifx\\#1\\ \else\begin{subequations}\label{eqn:#1}\fi%
\begin{alignat}{2}
\min\ &\ #2 \nonumber \\
\st\ #3
\end{alignat}
\ifx\\#1\\ \else\end{subequations}\fi%
}
\newcommand{\OptMinNoStar}[3][]{
\ifx\\#1\\ \else\begin{subequations}\label{eqn:#1}\fi%
\begin{alignat}{2}
\min\ &\ #2 \ifx\\#1\\ \nonumber \else \tag{\ref{eqn:#1}}\label{eqnset:#1} \fi \\
\st\ #3
\end{alignat}
\ifx\\#1\\ \else\end{subequations}\fi%
}

\title{\LARGE \bf Robust Distributed and Localized Model Predictive Control}

\author{Carmen Amo Alonso, Jing Shuang (Lisa) Li, Nikolai Matni and James Anderson
\thanks{C. Amo Alonso and J.S. Li are with the Computing and Mathematical Sciences Department at California Institute of Technology, Pasadena, CA, {\tt\small \{camoalon,jsli\}@caltech.edu}.  N. Matni is with the Department of Electrical and Systems Engineering at the University of Pennsylvania, Philadelphia, PA, {\tt\small nmatni@seas.upenn.edu}. J. Anderson is with the Department of Electrical Engineering and the Data Science Institute at Columbia University, New York, NY. {\tt\small james.andersosn@columLbia.edu}.  N. Matni is generously supported by NSF award CPS-2038873 and CAREER award ECCS-2045834.}}

\showWriterCommenttrue

\begin{document}

\maketitle
\thispagestyle{empty}
\pagestyle{empty}

\bstctlcite{IEEE_BSTcontrol}

\begin{abstract}

We present a robust Distributed and Localized Model Predictive Control (rDLMPC) framework for large-scale structured linear systems. The proposed algorithm uses the System Level Synthesis  to provide a distributed \emph{closed-loop} model predictive control scheme that is robust to exogenous disturbances. The resulting controllers require  only local information  exchange for both synthesis and implementation. We exploit the fact that for polytopic disturbance constraints, SLS-based distributed control problems have been shown to have structure amenable for distributed optimization techniques. We show that similar to the disturbance-free DLMPC algorithm, the computational complexity of rDLMPC  is independent of the size of the global system. To the best of our knowledge, robust DLMPC is the first MPC algorithm that allows for the scalable distributed computation of distributed closed-loop control policies in the presence of additive disturbances.

\end{abstract}

\section{Introduction}\label{sec:introduction}

Model Predictive Control (MPC) has proven to be very successful in many industrial applications. It provides an effective way to control systems that are subject to hard state and input constraints while ensuring good performance. However, one key consideration when dealing with constrained control in real-world applications is the presence of disturbances. While nominal MPC enjoys some intrinsic robustness \cite{limonmarruedo2002stability}, the resulting closed-loop can be destabilized by an arbitrary small disturbance \cite{grimm2004examples}. For this reason, efforts have been made to extend nominal MPC approaches to the robust setting.

Our focus is on designing robust distributed MPC controllers for heterogenous networks with additive disturbances. Broadly speaking, there are two existing methods to tackling this problem:  the first builds on centralized robust MPC techniques based on a pre-computed stabilizing controller (tube-MPC \cite{langson2004robust}, constraint tightening \cite{richards2006robust}, etc.), and ports them to the distributed setting. Although this extension results in computationally efficient algorithms, they are usually too restrictive in their assumptions: for example, the subsystems must be dynamically decoupled  \cite{richards2007robust}, or, a pre-computed structured stabilizing controller is needed \cite{conte2013robust} (which requires solving a NP-hard problem \cite{blondel1997np}). The second approach is based on computing a dynamic structured feedback policy using a suitable parameterization. In the seminal paper \cite{goulart2006optimization}, the problem of synthesizing robust control policies is solved through convex optimization by virtue of using a disturbance based parameterization of the control policy. Other works have been developed based on these ideas \cite{furieri2017robust}, and although they are able to formulate the synthesis of a structured closed-loop control policy as a convex problem, the resulting optimization is not amenable for distributed optimization techniques.
Along this line, recent works exploit the System Level Synthesis (SLS) parametrization to design robust MPC controllers \cite{sieber2021system,chen2020robust}. However, it is not yet clear how these formulations can be applied in a distributed setting.

Despite these efforts,  synthesizing \emph{and} implementing a closed-loop MPC controller that is robust to additive disturbances in a distributed fashion remains an open problem.  For large-scale networks, it is desirable for the control scheme to respect communication limitations among sub-controllers. Here we present an algorithm based on the SLS framework that achieves both these goals.  To do so, we exploit the SLS parameterization, which characterizes achievable closed-loop system responses as linear maps from disturbances to input and states, and integrate our prior results on Distributed and Localized MPC (DLMPC) \cite{amoalonso2020distributed} and robust invariant linear-time-invariant control \cite{chen2019system} into a novel robust Distributed and Localized MPC (rDLMPC) algorithm.

Our rDLMPC algorithm provides a convex formulation, has robust stability guarantees, and has local (i.e., per subsystem) computational and implementation complexity that scales independently of the number of states in the system model. Moreover, it allows for the incorporation of locality constraints, where information exchanges are restricted to local sub-controllers only. The concept of locality is central to the SLS framework: it permits scalable synthesis and algorithms and is a desirable property for the closed-loop system. Broadly speaking, a localized system response limits the impact of a disturbance affecting a subsystem in the network to a small neighborhood of connected subsystems. We formalize this idea in \sec{sls}. A by-product of localization is that the resulting MPC sub-controllers only have to solve a low-dimensional optimization problem, and only exchange information with controllers in a local neighborhood. We demonstrate through simulation that the computational overhead and decrease in performance due to conservatism is only slightly higher than for the nominal case \cite{amoalonso2020distributed}, and that  the complexity of the subproblems solved by each subsystem is independent of the size of the system.

\paragraph*{Notation}

Bracketed indices denote time-step of the real system, i.e., the system input is  $u(t)$ at time $t$, not to be confused with $x_t$ which denotes the predicted state $x$ at time $t$. Superscripted variables, e.g. $x^k$,  correspond to the value of $x$ at the $k^{th}$ iteration of a given algorithm. Square bracket notation, i.e., $[x]_{i}$ denotes the components of $x$ corresponding to subsystem $i$.  Calligraphic letters such as $\mathcal{S}$ denote sets, and lowercase script letters such as $\mathfrak{c}$ denotes a subset of $\mathbb{Z}^{+}$, i.e., $\mathfrak{c}=\left\{1,...,n\right\}\subset\mathbb{Z}^{+}$.  Boldface lower and upper case letters such as $\mathbf{x}$ and $\mathbf{K}$ denote finite horizon signals and block lower triangular (causal) operators respectively. We define
\begin{equation*}
\mathbf{x}=\left[\begin{array}{c} x_{0}\\x_{1}\\\vdots\\x_{T}\end{array}\right], ~
K =   { {\scriptscriptstyle{\left[\begin{array}{cccc}K_{0}[0] & & & \\ K_{1}[1] & K_{1}[0] & & \\ \vdots & \ddots & \ddots & \\ K_{T}[T] & \dots & K_{T}[1] & K_{T}[0] \end{array}\right]}}},
\end{equation*}
where each $x_i$ is an $n$-dimensional vector, and each $K_{i}[j]$ is a matrix of compatible dimension representing the value of $K$ at the $j^\text{th}$ time-step computed at time $i$. $\mathbf{K}(\mathfrak{r},\mathfrak{c})$ denotes the submatrix of $\mathbf{K}$ composed of the rows and columns specified by $\mathfrak{r}$ and $\mathfrak{c}$ respectively. We denote the block columns of  $\mathbf K$ by $\mathbf K\{1\}$,...,$\mathbf K\{T\}$, i.e.  $\mathbf K\left\{1\right\}:=[K_{0}[0]^{\mathsf{T}}\ \dots\ K_{T}[T]^{\mathsf{T}}]^{\mathsf{T}}$, and we use $:$ to indicate the range of columns, i.e. $\mathbf K\left\{2:T\right\}$ contains the block columns from the second to the last.

\section{Problem Statement}\label{sec:problem_statement}

We consider a discrete-time linear time invariant (LTI) system, with dynamics:
\begin{equation}\label{eqn:LTI}
x(t+1) = Ax(t)+Bu(t)+w(t),
\end{equation}
where $x(t)\in \mathbb{R}^n$ and $u(t)\in\mathbb{R}^p$ are the state and control input respectively, and $w(t)\in\mathbb{R}^n$ is an exogenous disturbance. We view system \eqn{LTI} as being composed of $N$ interconnected subsystems, so the state, control input and disturbance can be partitioned into $[x(t)]_i$, $[u(t)]_i$ and $[w(t)]_i$ for each subsystem $i$ inducing a compatible block structure $[A]_{ij}$, $[B]_{ij}$ in the dynamic matrices $(A,B)$. We model the interconnection topology as a time-invariant unweighted directed graph $\mathcal{G}_{(A,B)}(\mathcal{E},\mathcal{V})$, where each subsystem $i$ is identified with a vertex $v_i\in \mathcal{V}$ and an edge between subsystems $i$ and $j$ $e_{ij}\in\mathcal{E}$ exists whenever $[A]_{ij}\neq0$ or $[B]_{ij}\neq0$. We further assume that the information exchange topology between sub-controllers matches that of the underlying system, and can thus be modeled by the same graph $\mathcal{G}_{(A,B)}(\mathcal{E},\mathcal{V})$.

We study the case where the control input is a model predictive controller optimizing a nominal objective, subject to constraints that must be robustly satisfied for all realizations of exogenous disturbances within a set $w(t)\in\mathcal{W}$. As is standard, at each time step the controller solves an optimal control problem over a finite prediction horizon $T$ using the current state as the initial condition. Hence, at time step $\tau$, we solve:
\begin{equation} \label{eqn:MPC}
\begin{aligned}
& \underset{{x}_{t},u_{t}, \gamma_t}{\text{minimize}} &  &\sum_{t=0}^{T-1}f_{t}(x_{t},u_{t})+f_{T}(x_{T})\\
& \ \text{s.t.} &  &\begin{aligned}
& x_{0} = x(\tau),\, \ x_{t+1} = Ax_{t}+Bu_{t}+w_t,\\
& x_{T}\in\mathcal{X}_{T},\, x_{t}\in\mathcal{X}_{t},\, u_{t}\in\mathcal{U}_{t},\, \forall w_t\in\mathcal{W}_t \\
&u_{t} = \gamma_t(x_{0:t},u_{0:t-1}), \, t=0,...,T-1,
\end{aligned}
\end{aligned}
\end{equation}
where the $f_t(\cdot,\cdot)$ and $f_T(\cdot)$ are convex cost functions, $\mathcal{X}_t$, $\mathcal{U}_t$ and $\mathcal{W}_t$ are convex polytopes containing the origin, and $\gamma_t(\cdot)$ is the control policy we optimize over.

Our goal is to define an algorithm that allows us to solve the robust MPC problem \eqn{MPC} in a distributed manner while respecting local communication constraints.  To achieve this goal, we impose that information exchange - as defined by the graph $\mathcal{G}_{(A,B)}(\mathcal{E},\mathcal{V})$ - is \emph{localized} to a subset of neighboring sub-controllers. Analogous to previous work \cite{amoalonso2020distributed,amoalonso2020explicit}, we use the notion of  a $d$-local information exchange constraint via the $d$-outgoing and $d$-incoming sets of the subsystems to restrict sub-controllers exchanges of their state and control actions to only neighbors at most $d$-hops away.
\begin{definition}{\label{def: in_out set}}
For a graph $\mathcal{G}(\mathcal{V},\mathcal{E})$, the \textit{d-outgoing set} of subsystem $i$ is $\textbf{out}_{i}(d) := \left\{v_{j}\ |\  \textbf{dist}(v_{i} \rightarrow v_{j} ) \leq d\in\mathbb{N} \right\}$. The \textit{d-incoming set} of subsystem $i$ is $\textbf{in}_{i}(d) := \left\{v_{j}\ |\ \textbf{dist}(v_{j} \rightarrow v_{i} ) \leq d\in\mathbb{N} \right\}$.  Note that $v_i \in \textbf{out}_{i}(d)\cap \textbf{in}_{i}(d)$ for all $d\geq 0$.
\end{definition}

Hence, we can achieve $d$-local information exchange on the distributed MPC problem \eqn{MPC}  by enforcing the following constraint on each local control policy:
\begin{multline}
[u_t]_i = \gamma_{t}^i\left([x_{0:t}]_{j\in \textbf{in}_i(d)},[u_{0:t-1}]_{j\in \textbf{in}_i(d)},\right.\\\left.
[A]_{j,k \in \textbf{in}_i(d)},[B]_{j,k\in  \textbf{in}_i(d)}\}\right),
\label{eqn:local_comms}
\end{multline}
for all $t=0,\dots,T$ and $i=1,\dots,N$, where $\gamma_{t}^i$ is a measurable function of its arguments. This implies that the closed-loop control policy for sub-controller $i$ can be  be computed using only states, control actions, and system models collected from $d$-hop incoming neighbors according to the communication topology $\mathcal{G}_{(A,B)}.$

In the remaining sections of this paper, we will show that by exploiting this information exchange topology together with suitable structural compatibility assumptions between the cost function and state and input constraints, we achieve an MPC controller robust to additive disturbances in which both synthesis and implementation of a control action at each subsystem are localized, i.e., depend only on state, control action, and plant model information from $d$-hop neighbors, where the size of the local neighborhood $d$ is a design parameter. Analogous to \cite{amoalonso2020distributed}, we achieve this by imposing appropriate $d$-local structural constraints on the closed-loop system responses of the system by leveraging the SLS framework to reformulate the MPC problem \eqn{MPC}. In addition, in this work we take advantage of recent robustness results in constrained SLS \cite{chen2019system}, which allows us to extend the results in \cite{amoalonso2020distributed} to the robust setting.

\section{System Level Synthesis MPC Reformulation}\label{sec:sls}

In this section we introduce relevant concepts of the SLS framework \cite{anderson2019system,wang2019system,wang2018separable} such as locality \cite{wang2018separable,wang2014localized} and how to encode state and input constraints \cite{chen2019system}, and discuss their central role in enabling a rDLMPC framework. We end by using these tools to formulate the robust Distributed and Localized MPC problem in the SLS framework.

\subsection{Time Domain System Level Synthesis}

Given the dynamics of system \eqn{LTI}, let $u(t) = K_t(x(0),...,x(t))$ be the control input from a causal linear\footnote{Without loss we assume the control policy is linear, as an affine control policy $u(t) = K_t(x(0:t))+v_t$ can always be written as a linear policy acting on the augmented state $\tilde{x} = [x^\intercal ~ 1^\intercal]^\intercal$.} time varying state feedback controller where $K_t$ is to be designed. The closed-loop dynamics over a finite time horizon $t = 0,...,T$ can be compactly written as
\begin{equation*}
\mathbf x = Z(\hat A+ \hat B\mathbf K)\mathbf x +\mathbf w,
\end{equation*}
where $Z$ is the block-downshift matrix\footnote{A matrix with identity matrices along its first block sub-diagonal and zeros elsewhere.}, $\hat A := \textbf{blkdiag}(A,A,...,A)$, and $\hat B := \textbf{blkdiag}(B,B,...,B)$.  The block lower-triangular operator $\mathbf{K}$ represents the control law defined by the linear-time-varying gains $\{K_t\}_{t=0}^T$, and the finite horizon signals $\mathbf{x},\ \mathbf{u},\ \mathbf{w}$ correspond to the state, control input and disturbance respectively.  By convention, we define the disturbance to contain the initial condition, i.e. $\mathbf{w}=[x_{0}^\intercal,w_{1}^\intercal,...,w_{T-1}^\intercal]^\intercal$.

By noting that $\mathbf{u}=\mathbf{K}\mathbf{x}$, the closed-loop behavior of the system can entirely be characterized by the block lower-triangular operators $\mathbf \Phi_x$ and $\mathbf \Phi_u$ (mapping disturbance to state and control action, respectively) defined as:
\begin{equation} \label{eqn:Phis}
\begin{split}
\mathbf{x} & = (I-Z(A+B\mathbf{K}))^{-1}\mathbf{w} =: \mathbf\Phi_x \mathbf w\\
\mathbf{u} & = \mathbf{K}(I-Z(A+B\mathbf{K}))^{-1}\mathbf{w} =: \mathbf\Phi_u \mathbf w.
\end{split}
\end{equation}
The \emph{system responses} $\mathbf{\Phi}_x$ and $\mathbf{\Phi}_u$ parameterize the set of achievable closed-loop maps \eqn{Phis} from the disturbance $\mathbf{w}$ to state $\mathbf{x}$  and control input $\mathbf{u}$ respectively, if and only if they are constrained to lie in the affine subspace
\begin{equation}\label{eqn:ZAB}
Z_{AB}\mathbf{\Phi}:=\begin{bmatrix}I-Z\hat{A} & -Z\hat{B}\end{bmatrix}\begin{bmatrix} \mathbf{\Phi}_x \\ \mathbf{\Phi}_u\end{bmatrix} = I,
\end{equation}
where we use $Z_{AB}$ and $\mathbf{\Phi}$ as short-hand notation for the constraint matrix and joint state/input system response, respectively.
In particular, any system responses $\mathbf{\Phi}_x$ and $\mathbf{\Phi}_u$ satisfying equation \eqref{eqn:ZAB} can be achieved via the control law $\mathbf{K}(\mathbf{\Phi}_x,\mathbf{\Phi}_u)=\mathbf{\Phi}_u\mathbf{\Phi}_x^{-1}$, where $\mathbf{K}(\mathbf{\Phi}_x,\mathbf{\Phi}_u)$ is also block lower triangular. For a formal statement and a proof, please see \cite[Theorem 2.1]{anderson2019system}.

The SLS framework relies on the affine parametrization \eqn{Phis} to reformulate optimal control problems as a search over system responses $\mathbf{\Phi}$ satisfying equation \eqn{ZAB}, rather than an optimization problem over states and inputs $\{\mathbf x, \mathbf u \}$. Using parametrization \eqn{Phis}, we reformulate the MPC subroutine \eqn{MPC} in terms of the system responses as
\begin{equation} \label{eqn:MPC_SLS}
\begin{aligned}
& \underset{\mathbf{\Phi}}{\text{minimize}}
& &f(\mathbf{\Phi}\{1\}x_0)\\
& \ \text{s.t.} &  &\begin{aligned}
& x_0 = x(\tau),\  Z_{AB}\mathbf{\Phi} = I,\ \mathbf{\Phi}\mathbf{w}\in\mathcal{P} \ \  \forall \mathbf{w}\in\mathcal{W},
\end{aligned}
\end{aligned}
\end{equation}
where the polytope $\mathcal{W} := \otimes_{t=0}^{T} \mathcal{W}_t$ and the polytope $\mathcal P$ is defined so that $\mathbf{\Phi}\mathbf{w}\in\mathcal{P}$ if and only if $x_{T}\in\mathcal{X}_{T},\, x_{t}\in\mathcal{X}_{t},\text{ and } u_{t}\in\mathcal{U}_{t}$, for all $t=0,...,T-1,\ \forall w\in\mathcal{W}$.  Note that $\mathbf \Phi\{1\} x_0$ appears in the objective function as it corresponds to the nominal (disturbance-free) state and input responses.

In what follows we discuss the advantages of this parametrization, and show how local structure can be imposed in this reformulation in a transparent manner via affine constraints. We also discuss how one can deal with the robust constraints while maintaining the local structure of the problem.

\subsubsection{Locality}

Here we illustrate how to enforce the information sharing constraint \eqn{local_comms} in the SLS framework, and how localized system responses result in a localized controller implementation.

A key advantage of using the SLS framework is that the system responses not only parametrize the closed-loop map but also provide a controller realization. In particular, the controller achieving the system responses \eqn{Phis} can be implemented as
\begin{equation}\label{eqn:implementation}
\begin{aligned}
\mathbf{u}=\mathbf{\Phi}_u\mathbf{\hat{w}},\ \ \  \mathbf{\hat{x}}=(I - \mathbf{\Phi}_x)\mathbf{\hat{w}},\ \ \
\mathbf{\hat{w}}=\mathbf{x}-\mathbf{\hat{x}},
\end{aligned}
\end{equation}
where $\mathbf{\hat{x}}$ is the nominal state trajectory, and $\mathbf{\hat{w}}=Z\mathbf{w}$ is a delayed reconstruction of the disturbance. The advantage of this controller implementation, as opposed to  $\mathbf u = \mathbf \Phi_u \mathbf \Phi_x^{-1} \mathbf x$, is that any structure imposed on the system response $\{\mathbf\Phi_u, \mathbf \Phi_x\}$ translates directly to structure on the controller implementation \eqn{implementation}. This is particularly relevant for imposing locality constraints, and we will show how locality in system responses translates into locality of the controller implementation.

\begin{definition}{\label{def: locality}}
Let $[\mathbf{\Phi}_{x}]_{ij}$ be the submatrix of system response $\mathbf{\Phi}_x$ describing the map from disturbance $[w]_{j}$ to the state $[x]_i$ of subsystem $i$. The map $\mathbf{\Phi}_{x}$ is \textit{d-localized} if and only if for every subsystem $j$, $[\mathbf{\Phi}_{x}]_{ij}=0\ \forall\ i\not\in\textbf{out}_{j}(d)$. The definition for \textit{d-localized} $\mathbf{\Phi}_u$ is analogous (but maps  disturbance $[w]_{j}$ to the input $[u]_i$).
\end{definition}

When the state system response $\mathbf{\Phi}_{x}$ is $d$-localized, subsystem $i$ only needs to collect the local subset $[\hat{\mathbf{w}}]_{j\in\textbf{in}_i(d)}$ of disturbance estimates $\hat{\mathbf{w}}$
from its $d$-incoming set of neighbors to compute its local disturbance estimate $[\hat{\mathbf{w}}]_{i}$. Similarly, when the input system response $\mathbf{\Phi}_{u}$ is $d$-localized, then only a local subset $[\hat{\mathbf{w}}]_{j\in\textbf{in}_i(d)}$ of the estimated disturbances $[\hat{\mathbf{w}}]$  is needed by subsystem $i$ to compute its control action $[\mathbf u]_i$. Therefore, if $\mathbf{\Phi}_{x}$ and $\mathbf{\Phi}_{u}$ are $d$-localized each subsystem only needs to collect information from its $d$-incoming set to implement the control law and it only needs to share information with its $d$-outgoing set to allow for its neighboring subsystems to implement their local control laws.
\begin{definition}{\label{def: locality_constraints}}
A subspace $\mathcal{L}_d$ enforces a $d$\textit{-locality constraint} if $\mathbf{\Phi}_{x},\mathbf{\Phi}_{u}\in\mathcal{L}_d$ implies that $\mathbf{\Phi}_{x}$ is $d$-localized and $\mathbf{\Phi}_{u}$ is $(d+1)$-localized\footnote{$\mathbf{\Phi}_u$ is enforced to be $(d+1)$-localized because to confine the effects of a disturbance within the region of size $d$, controllers at distance $d+1$ must take action.}.
\end{definition}
\begin{remark}
Despite locality constraints always being an affine constraint, an SLS problem subject to locality constraints might have an empty solution space since not all systems are $d$-localizable. The size of the locality region $d$ is a design parameter, and for the remainder of the paper we assume that system \eqn{LTI} is $d$-localizable for some $d\ll N$. The results presented here extend naturally to virtually localizable systems via the robust variant of SLS \cite{anderson2019system,matni2017scalable}.
\end{remark}

\subsubsection{State and Input Constraints}

Next we present a method for dealing with the robust state and input constraints. We emphasize how the resulting SLS reformulation retains the locality structure of the original problem, which will be key to enabling distribution of the problem across the different subsystems as we will discuss in \sec{robust_dlmpc}.

As stated, problem \eqn{MPC_SLS} is a robust optimization problem since the convex constraint $\mathbf{\Phi}\mathbf{w}\in\mathcal{P}$ has to hold for all $\mathbf{w}\in\mathcal{W}$. Let us define the polytope $\mathcal P$ as $\mathcal P:= \{[\mathbf{x}^\intercal \ \mathbf{u}^\intercal]^\intercal:\ H[\mathbf{x}^\intercal \ \mathbf{u}^\intercal]^\intercal\leq h\}$, and since $\mathbf w$ is a polytopic constraint $\mathcal{W}:=\{\mathbf w:\ G\mathbf w \leq g\}$. A simple duality argument allows for a tractable reformulation of problem \eqn{MPC_SLS} (see \cite{chen2019system} for details and references therein):

\begin{theorem}[Theorem 1 of \cite{chen2019system}]\label{thm:robust_SLS}
Problem \eqn{MPC_SLS} can be reformulated as
\begin{equation} \label{eqn:MPC_SLS_dual}
\begin{aligned}
& \underset{\mathbf{\Phi},\mathbf{\Xi}\geq0}{\emph{minimize}}
& &f(\mathbf{\Phi}\{1\}x_0)\\
& \ \emph{s.t.} &  &\begin{aligned}
& x_0 = x(t),\ Z_{AB}\mathbf{\Phi} = I, \\
& H\mathbf{\Phi}\{1\}x_{0}+\mathbf{\Xi}g\leq h,\ H\mathbf{\Phi}\{2:T\}=\mathbf{\Xi}G,
\end{aligned}
\end{aligned}
\end{equation}
where
\begin{align*}
&G := \emph{\textbf{blkdiag}}(G_{1},...,G_{T-1})~\text{and,} \\
&H := \emph{\textbf{blkdiag}}(H_{x,1},...,H_{x,T},H_{u,1},...,H_{u,T-1}),
\end{align*}
with the constraint $\mathbf{\Xi}\geq0$  satisfied component-wise.
Further, if each of the $\{G_t,H_{x,t},H_{u,t}\}_{t=1}^{T}$ are block-diagonal with structure compatible with subsystem-wise decompositions of $\mathbf{x}$ and $\mathbf{u}$, then imposing that $\mathbf \Phi \in \mathcal L_d$ allows us to take $\mathbf \Xi \in \mathcal L_d$ without loss of generality.
\end{theorem}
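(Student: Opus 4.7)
The plan is to attack the two assertions of the theorem separately: first the dual reformulation of the robust constraint, and second the locality-preservation statement.

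For the reformulation, I would start by partitioning $\mathbf{w}$ into its initial-condition block $x_0$ and its disturbance block $\mathbf{w}_{1:T-1}$, so that $\mathbf{\Phi}\mathbf{w} = \mathbf{\Phi}\{1\}x_0 + \mathbf{\Phi}\{2:T\}\mathbf{w}_{1:T-1}$, and $x_0$ is fixed by the measurement constraint $x_0=x(\tau)$ while $\mathbf{w}_{1:T-1}$ ranges over the polytope $\{G\mathbf{w}_{1:T-1}\leq g\}$. The semi-infinite constraint $H\mathbf{\Phi}\mathbf{w}\leq h$ for all admissible $\mathbf{w}$ is then equivalent, row by row, to
\begin{equation*}
[H\mathbf{\Phi}\{1\}x_0]_i + \max_{G\mathbf{w}\leq g}\,[H\mathbf{\Phi}\{2:T\}]_i\,\mathbf{w} \leq h_i.
\end{equation*}
Each inner maximization is a linear program in $\mathbf{w}$, so I invoke strong LP duality (the disturbance polytope is assumed nonempty and bounded so Slater / finiteness holds) to replace it by the minimum over dual multipliers $\xi_i\geq 0$ satisfying $\xi_i^{\mathsf T}G = [H\mathbf{\Phi}\{2:T\}]_i$, with value $\xi_i^{\mathsf T}g$. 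Stacking the row duals into a nonnegative matrix $\mathbf{\Xi}$ yields the equality $\mathbf{\Xi}G=H\mathbf{\Phi}\{2:T\}$ together with the pointwise inequality $H\mathbf{\Phi}\{1\}x_0 + \mathbf{\Xi}g \leq h$. A brief argument shows that the reformulated problem (\ref{eqn:MPC_SLS_dual}) is feasible iff the original one is, since any feasible $\mathbf{\Phi}$ yields such a $\mathbf{\Xi}$ by duality and conversely any $(\mathbf{\Phi},\mathbf{\Xi})$ feasible for (\ref{eqn:MPC_SLS_dual}) upper bounds $H\mathbf{\Phi}\mathbf{w}$ over $\mathcal{W}$, so the two problems share the same objective and feasible set in $\mathbf{\Phi}$.

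For the locality claim, I would exploit the block-diagonal structure of $G$ and $H$. Locality of $\mathbf{\Phi}\in\mathcal{L}_d$ means that the rows of $\mathbf{\Phi}$ corresponding to subsystem $i$ are supported in column blocks indexed by $\mathbf{in}_i(d)$. Since $H$ is block-diagonal and compatible, the rows of $H\mathbf{\Phi}\{2:T\}$ assigned to subsystem $i$ inherit exactly this column-block support. Hence the inner LP for those rows, $\max_{G\mathbf{w}\leq g} [H\mathbf{\Phi}\{2:T\}]_i\mathbf{w}$, only depends on the subvectors $[\mathbf{w}]_{j\in\mathbf{in}_i(d)}$; because $G$ is block-diagonal too, the constraint $G\mathbf{w}\leq g$ decomposes into independent polytopic constraints on each $[\mathbf{w}]_j$. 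Consequently the LP decouples into a maximization over just the local neighbors, and its dual only needs multipliers against the constraint blocks $G_j$ with $j\in\mathbf{in}_i(d)$, with the remaining entries of $\xi_i$ forceable to zero. Collecting these gives a feasible $\mathbf{\Xi}$ with the same block-sparsity pattern as $\mathbf{\Phi}$, i.e.\ $\mathbf{\Xi}\in\mathcal{L}_d$.

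The main obstacle I anticipate is the locality step, specifically making the ``without loss of generality'' claim rigorous: one must argue that truncating the dual variables to the $\mathbf{in}_i(d)$ block remains feasible (it does, because dropping slack dual multipliers against redundant constraints never violates $\xi^{\mathsf T}G=[\cdot]_i$ once the right-hand side itself vanishes outside the local block) and that the inequality $H\mathbf{\Phi}\{1\}x_0 + \mathbf{\Xi}g\leq h$ is unaffected (it is, since zeroing dual entries only lowers $\mathbf{\Xi}g$ componentwise because $g\geq 0$ whenever $\mathcal{W}$ contains the origin, which is assumed). The duality step itself is routine once the initial-condition split is made, so the bulk of the writing effort will go into carefully tracking the block-sparsity patterns induced by $H$, $G$, and $\mathbf{\Phi}\in\mathcal{L}_d$ through the LP duality argument.
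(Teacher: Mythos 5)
Your proposal is correct and follows essentially the same route the paper relies on: the paper does not prove this result itself but imports it from \cite{chen2019system} with the remark that ``a simple duality argument'' yields the reformulation, and your row-by-row LP dualization of the semi-infinite constraint (after splitting $\mathbf{w}$ into the fixed $x_0$ and the uncertain tail) together with the block-wise decoupling of the dual multipliers under block-diagonal $H$, $G$ is precisely that argument. Your treatment of the ``without loss of generality'' locality claim --- zeroing the dual blocks outside $\textbf{in}_i(d)$, which stays feasible because the corresponding right-hand sides vanish and only decreases $\mathbf{\Xi}g$ since $g\geq 0$ --- is the right way to make the cited statement rigorous.
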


Theorem \ref{thm:robust_SLS} allows us to solve the robust MPC problem \eqn{MPC} using standard convex optimization methods, and further preserves the locality structure of the original problem so long as the constraint matrices $H$ and $G$ defining the polytopes $\mathcal{P}$ and $\mathcal{W}$, respectively, have suitable block-diagonal structure.  We formalize the implications of these structural \emph{decoupling} assumptions on the constraint sets, and introduce an analogous decoupling assumption on the objective function, here.

\begin{assumption}\label{assump: locality}
The objective function $f_{t}$ in formulation \eqn{MPC} is such that $f_{t}(x)=\sum f_{t}^i([x]_i,[u]_i)$. The constraint sets in formulation \eqn{MPC} are such that $x\in\mathcal{X}_t=\mathcal{X}_{t}^1\times ... \times \mathcal{X}_{t}^N$, where $x \in \mathcal{X}_t$ if and only if $[x]_i\in\mathcal{X}_{t}^i$ for all $i$ and $t\in\{0,...,T\}$, and idem for $\mathcal{U}_t$.  In particular, this implies that the matrices $H$ and $G$ defining the polytopes $\mathcal{P}$ and $\mathcal{W}$, respectively, are block-diagonal as described in Theorem \ref{thm:robust_SLS}.
\end{assumption}

\begin{remark}
Although Theorem \ref{thm:robust_SLS} focuses on disturbances constrained to polytopic constraint sets, the argument can be extended to any setting for which a closed-form expression for $\sum_{i=0}^N\sup_{[w]_i \in \mathcal{W}^i} e_j^\intercal [H\mathbf{\Phi}]\{i\}[w]_i\leq h$ can be computed for $e_j$ the standard basis unit.  For example, if $\mathcal{W}^i = \{ w \, : \, \| w \| \leq \sigma_w\}$, then this reduces to $\sum_{i=0}^N \sigma_w \|[H\mathbf{\Phi}]\{i\}^\intercal e_j\|_\star\leq h$, for $\|\cdot\|_\star$ the dual norm.  Note however, that for block-diagonal $H$ and localized $\mathbf\Phi$, this constraint will reduce to $\sum_{i : [H\mathbf{\Phi}]\{i\}^\intercal e_j\neq 0} \sigma_w \|[H\mathbf{\Phi}]\{i\}^\intercal e_j\|_\star \leq h$, which can be similarly enforced using only local information.
\end{remark}

\subsection{System Level Synthesis based Robust MPC}
Here we combine the previously introduced locality and state/input robust constraints with the SLS reformulation of the MPC subroutine \eqn{MPC}, and present the robust Distributed and Localized MPC problem. We reformulate subroutine \eqn{MPC} in terms of the SLS variables as in equation \eqn{MPC_SLS}, and impose appropriate $d$-local structural constraints on the \emph{closed-loop system responses} of the system. Hence, both the synthesis and implementation of the control action at each subsystem is localized, i.e., the control policy depends only on state, control, and plant model information from $d$-hop neighbors.

While it was not possible to incorporate locality as expressed in equation \eqn{local_comms} into the original problem \eqn{MPC} in a convex and computationally tractable manner, it is straightforward to do so via the affine constraint $\mathbf{\Phi_x,\mathbf\Phi_u\in\mathcal L_d}$ as per Definition \ref{def: locality}, with the only requirement that some mild compatibility assumptions as per Assumption \ref{assump: locality} between the cost functions, state and input constraints, and $d$-local information exchange constraints are satisfied.

In particular, Assumption \ref{assump: locality} imposes that neither the constraints nor the objective function can couple two subsystems in the network, and therefore they are fully separable among the different subsystems. Notice that even though this assumption is written in terms of the objective function and the constraints of problem  \eqn{MPC}, it directly translates to the objective function and constraint set of the SLS reformulation \eqn{MPC_SLS_dual}. Moreover, this local structure holds for the duality based reformulation of the robust SLS problem described in \thm{robust_SLS}. Therefore, the rDLMPC problem \eqn{MPC} subject to local information constraint \eqn{local_comms} can be written as
\begin{subequations}\label{eqn:robust_DLMPC}
\begin{align}
& \underset{\mathbf{\Phi},\mathbf{\Xi}\geq0}{\text{minimize}}
& &f(\mathbf{\Phi}\{1\}x_0)\\
& \ \text{s.t.} &  & x_0 = x(t),\ Z_{AB}\mathbf{\Phi} = I,\ \mathbf{\Phi},\mathbf{\Xi}\in\mathcal{L}_d,\\
&  &  & H\mathbf{\Phi}\{1\}x_{0}+\mathbf{\Xi}g\leq h,\ H\mathbf{\Phi}\{2:T\}=\mathbf{\Xi}G \label{eqn:robust_DLMPC-const1}.
\end{align}
\end{subequations}

One of the advantages of using the SLS framework is that this approach is automatically optimizing over \emph{closed-loop policies} -- as opposed to open-loop control inputs -- since the optimization is carried out over the closed-loop maps $\mathbf{\Phi}_x$ and $\mathbf{\Phi}_u$. In \cite{amoalonso2020distributed}, we discuss and formalize in detail the advantages of using SLS over other feedback parameterizations such as \cite{goulart2006optimization,furieri2017robust} for distributed MPC. In summary, SLS allows for localized communication constraints \eqn{local_comms} as well as the robust state and input constraints to be transparently enforced through convex constraints on the system responses.  This in turn results in a problem structure that is amenable to using distributed optimization for the computing of the resulting control policies, and as we will show in the next section, sub-controllers only need to collect local information to do so.
\section{Robust Distributed and Localized MPC}\label{sec:robust_dlmpc}

We present a brief overview of the ADMM algorithm, and then show how it can be used to distribute the rDLMPC subroutine \eqn{robust_DLMPC} into local subproblems requiring $d$-local information only. We finish by analyzing the convergence and complexity of the resulting rDLMPC algorithm.

\subsection{The Alternating Method of the Multipliers}
The varying penalty parameter ADMM algorithm~\cite[\S 3.4]{boyd2010distributed} solves problems of the form
\begin{equation*}
\underset{x,y}{\text{minimize}} \ f(x)+g(y)\ \text{s.t.} \ Ax+By=c.
\end{equation*}
The ADMM  iterates are as follows:
\begin{equation*}
\begin{aligned}
& x^{k+1} = \arg\min_x f(x)+\frac{\rho^k}{2}\left\| Ax+By^{k}-c+z^{k}\right\|_{2}^{2}\\
& y^{k+1} = \arg\min_y g(y)+\frac{\rho^k}{2}\left\| Ax^{k+1}+By-c+z^{k}\right\|_{2}^{2}\\
& z^{k+1} = z^{k}+Ax^{k+1}+By^{k+1}-c,
\end{aligned}
\end{equation*}
where
\begin{equation}\label{eqn:rho}
\rho^k:=
\begin{cases}
\tau\rho^k & \text{if } \left\Vert r^{k+1} \right\Vert_2 > \mu_1 \left\Vert s^{k+1} \right\Vert_2, \\
\tau^{-1}\rho^k & \text{if } \left\Vert r^{k+1} \right\Vert_2 < \mu_2 \left\Vert s^{k+1} \right\Vert_2,\\
\rho^{k-1} & \text{otherwise,}
\end{cases}
\end{equation}
with
$r^{k}:= Ax^{k}+By^{k}-c,\ s^{k} := \rho A^\intercal B(y^{k}-y^{k-1}),$
and $\tau,\mu_1,\mu_2>0$. This version of ADMM is known as the varying penalty parameter.

The  algorithm is said to have converged at iteration $k$ if
$\left\Vert r^{k} \right\Vert_2 \leq \epsilon_p$, and $\left\Vert s^{k} \right\Vert_2 \leq \epsilon_d$ for some small primal and dual tolerances, $\epsilon_p>0$ and $\epsilon_d>0$. To guarantee convergence, $\rho$ is set to a constant value $\rho_\text{max}$ after a certain number of iterations. Residual, objective, and dual variable convergence are guaranteed under mild regularity assumptions, assuming a non-empty feasible set (as this implies strong duality holds, as Slater's condition always holds when constraints are polytopes and either the primal or dual problem is feasible).
\begin{assumption}\label{assump: saddle point}
The objective function $f(\mathbf{\Phi}x_0)$ is a closed, proper, and convex function for all choices of $x_0\neq 0$.
\end{assumption}

\subsection{The Robust DLMPC Algorithm}

We will show how the ADMM algorithm can be used to distribute \eqn{robust_DLMPC} into a local subproblem per subsystem. To do so, we exploit the localization of the cost function and the constraints, which by Assumption \ref{assump: locality} imply that
\begin{gather*}
f(\mathbf x,\mathbf u) = \sum_{i = 1}^N f^i([\mathbf x]_i, [\mathbf u]_i), \text{ and} \\
[\mathbf x^\intercal\ \mathbf u^\intercal]^\intercal \in \mathcal P \text{ if and only if } [H]_i\begin{bmatrix}[\mathbf x]_i \\ [\mathbf u]_i \end{bmatrix}\leq [h]_i\ \forall i,
\end{gather*}
with each submatrix $[H]_i$ corresponding to the constraints of subsystem $i$ for $i=1,\dots,N$, i.e., $H=\textbf{blkdiag}([H]_1,\dots,[H]_N)$. Using the definition of the SLS system responses \eqn{Phis}, we can equivalently write these conditions in terms of $\mathbf \Phi$ as
\begin{gather*}
f(\mathbf \Phi) = \sum_{i = 1}^N f^i\big(\mathbf \Phi\{1\}(\mathfrak{r}_i,:)x_0\big), \text{ and} \\
\mathbf \Phi \in \mathcal P \text{ if and only if } [H]_i\mathbf \Phi(\mathfrak{r}_i,:)x_0\leq [h]_i\ \forall i,
\end{gather*}
where $\mathfrak r_i$ is the set of rows in $\mathbf \Phi$ corresponding to subsystem $i$, i.e., those that parametrize $[\mathbf x]_i$ and $[\mathbf u]_i$. These separability features are formalized in the following definition:
\begin{definition}
Given the partition $\{\mathfrak c_1,...,\mathfrak c_k\}$, a functional/set is \emph{column-wise separable} if:
\begin{itemize}
\item For a functional, $g(\mathbf \Phi) = \sum_{i = 1}^k g_i\big(\mathbf \Phi(:,\mathfrak{c}_i)\big)$ for some functionals $g_i$ for $i=1,...,k$.
\item For a set, $\mathbf \Phi \in \mathcal P$ if and only if $\mathbf \Phi(:,\mathfrak{c}_i) \in \mathcal P_i \ \forall i$ for some sets $\mathcal P_i$ for $i=1,...,k$.
\end{itemize}
\end{definition}
An analogous definition exists for \emph{row-wise separable} functionals and sets, where the partition $\{\mathfrak r_1,...,\mathfrak r_k\}$ entails the rows of $\mathbf \Phi$, i.e., $\mathbf \Phi(\mathfrak{r}_i,:)$.

When the objective function and all the constraints of an optimization problem share the same type of separability with respect to a partition of cardinality $k$, then the optimization trivially decomposes into $k$ independent subproblems. However, this is not the case for problem \eqn{robust_DLMPC}, where the constraint \eqn{robust_DLMPC-const1} is neither row-wise nor column-wise separable. To make it amenable for distribution, we propose the following reformulation:
\begin{align}\label{eqn:robust_DLMPC_reformulation}
& \underset{\mathbf{\Phi},\mathbf{\Psi},\mathbf{\Omega}, \mathbf{\Xi}\geq0}{\text{min}}
& &f(\mathbf{\Phi} x_0)\\
& \ \text{s.t.} &
& x_0 = x(t),\ Z_{AB}\mathbf{\Psi} = I,\ \mathbf{\Phi},\mathbf{\Xi},\mathbf{\Psi},\mathbf{\Omega}\in\mathcal{L}_{d}, \nonumber \\
&  &  &\mathbf{\Omega}x_{0}+\mathbf{\Xi}g\leq h, \nonumber \\
&  &   &\begin{bmatrix} I & 0 \\ 0 & H \end{bmatrix} \begin{bmatrix} \mathbf{\Psi}\{1\} & 0 \\ \mathbf{\Psi}\{1\} & \mathbf{\Psi}\{2:T\}\end{bmatrix}= \begin{bmatrix} \mathbf{\Phi} & 0 \\ \mathbf{\Omega} & \mathbf{\Xi} G \end{bmatrix} \nonumber ,
\end{align}
where  $\mathbf{\Phi},\mathbf{\Omega}$ to refer to only the first block-column of the homonymous of the matrices $\mathbf{\Phi}\{1\},\mathbf{\Omega}\{1\}$.

In this reformulation, it is straightforward to distinguish between the column-wise separable components (everything involving $\mathbf\Psi$), and the row-separable components (the objective function and the remaining constraints that do not contain $\mathbf\Psi$). Problems of this nature are known as \emph{partially separable} problems, and are amenable for distribution via ADMM. In particular, we can decompose problem \eqn{robust_DLMPC_reformulation} as
\begin{subequations}\label{eqn:robust_DLMPC_distributed}
\begin{align}
& \begin{aligned}
&[\mathbf{\Phi}^{k+1},\mathbf{\Omega}^{k+1},\mathbf{\Xi}^{k+1}] = \\
&\left\{\begin{aligned}
&\underset{\mathbf{\Phi},\mathbf{\Omega},\mathbf{\Xi}\geq0}{\text{argmin}}
&& \begin{aligned}
& f(\mathbf{\Phi}x_0)+ \frac{\rho}{2}\left\Vert
\tilde{\mathbf{\Phi}}-
\tilde{H}\tilde{\mathbf\Psi}^k + \mathbf{\Lambda}^{k}\right\Vert^{2}_{F}
\end{aligned}
\\
&\;\;\;\text{s.t.}  &&
\begin{aligned}
& x_0 = x(t),\ \mathbf{\Omega}x_{0}+\mathbf{\Xi}g\leq h,\\
&\mathbf{\Phi},\mathbf{\Xi},\mathbf{\Omega}\in\mathcal{L}_{d}
\end{aligned}
\end{aligned}\right\}
\end{aligned} \label{eqn:robust_DLMPC_distributed-row}
\\[5pt]
& \mathbf{\Psi}^{k+1} =
\left\{
\begin{aligned}
&\underset{\mathbf{\Psi}}{\text{argmin}}
&& \begin{aligned}
& \left\Vert
\tilde{\mathbf{\Phi}} -
\tilde{H} \tilde{\mathbf\Psi}
+ \mathbf{\Lambda}^{k}\right\Vert^{2}_{F}
\end{aligned}
\\
&\;\;\;\text{s.t.}
&& Z_{AB}\mathbf{\Psi} = I,\; \mathbf{\Psi}\in\mathcal{L}_{d}
\end{aligned}\right\}
\label{eqn:robust_DLMPC_distributed-column}
\\[5pt]
&\begin{aligned}
\mathbf{\Lambda}^{k+1}   =  \mathbf{\Lambda}^{k}+
\tilde{\mathbf\Phi}^{k+1} - \tilde{H}\tilde{\mathbf\Psi}^{k+1},
\end{aligned}\label{eqn:robust_DLMPC_distributed-lagrange}
\end{align}
\end{subequations}
where for ease of notation we define
$$\tilde{\mathbf{\Phi}} := \begin{bmatrix} \mathbf{\Phi} & 0 \\ \mathbf{\Omega} & \mathbf{\Xi} G \end{bmatrix} \text{ and }
\tilde{H} \tilde{\mathbf\Psi} := \begin{bmatrix} I & 0 \\ 0 & H \end{bmatrix} \begin{bmatrix} \mathbf{\Psi}\{1\} & 0 \\ \mathbf{\Psi}\{1\} & \mathbf{\Psi}\{2:T\}\end{bmatrix} $$

In equations \eqn{robust_DLMPC_distributed}, iterate \eqn{robust_DLMPC_distributed-row} is row-wise separable,\eqn{robust_DLMPC_distributed-column} is column-wise separable, and \eqn{robust_DLMPC_distributed-lagrange} is both row and column-wise separable. We take advantage of the structure of these subproblems and separate them with respect to a row and column partition induced by the subsystem-wise partitions of the state and control inputs, $[\mathbf x]_i$ and $[\mathbf u]_i$ for each subsystem $i$. Each of these row and column subproblems resulting from the distribution across subsystems can be solved independently and in parallel, where each subsystem solves for its corresponding row and column partition. Furthermore, since locality constraints are imposed, the decision variables $\mathbf\Phi,\mathbf\Psi,\mathbf\Xi,\mathbf\Lambda$ have a sparse structure. This means that the length of the rows and columns that a subsystem solves for is much smaller than the length of the rows and columns of $\mathbf{\Phi}$. For instance, when considering the column-wise subproblem evaluated at subsystem $i$, the $j^\text{th}$ row of the $i^\text{th}$ column partitions of $\mathbf\Phi_x$ and $\mathbf\Phi_u$ is nonzero only if $j\in\cup_{k\in\textbf{out}_i(d)}\mathfrak{r}_k$ and $j\in\cup_{k\in\textbf{out}_i(d+1)}\mathfrak{r}_k$ respectively. An algorithm to find the relevant components for each subsystem rows and columns can be found in Appendix A of \cite{wang2018separable}. Therefore, the subproblems that subsystem $i$ solves for are:
\begin{subequations}\label{eqn:robust_DLMPC_local}
\begin{align}
& \begin{aligned}
&[[\mathbf{\Phi}]_{i_r}^{k+1},[\mathbf{\Omega}]_{i_r}^{k+1},[\mathbf{\Xi}_{i_r}^{k+1}]] = \\
&\left\{\begin{aligned}
&\underset{[\mathbf{\Phi}]_{i_r},[\mathbf{\Omega}]_{i_r},[\mathbf{\Xi}]_{i_r}\geq0}{\text{argmin}}
&& \begin{aligned}
& f([\mathbf\Phi]_{i_r}[x_0]_{i_r})+ \\
&\frac{\rho}{2} \left\Vert {[\tilde{\mathbf{\Phi}}]_{i_r}}-[\tilde{H}]_{i_r}[\tilde{\mathbf\Psi}]_{i_r}^k + [\mathbf{\Lambda}]_{i_r}^{k}\right\Vert^{2}_{F}
\end{aligned}
\\
&\;\;\;\text{s.t.}  &&
\begin{aligned}
& [\mathbf{\Omega}]_{i_r}[x_{0}]_{i_r}+[\mathbf{\Xi}]_{i_r}[g]_{i_r}\leq [h]_{i_r}
\end{aligned}
\end{aligned}\right\}
\end{aligned} \label{eqn:robust_DLMPC_local-row}
\\[5pt]
& [\mathbf{\Psi}]_{i_c}^{k+1} =
\left\{
\begin{aligned}
&\underset{[\mathbf{\Psi}]_{i_c}}{\text{argmin}}
&& \begin{aligned}
& \left\Vert
{[\tilde{\mathbf{\Phi}}]_{i_c}} -
[\tilde{H}]_{i_c} [\tilde{\mathbf\Psi}]_{i_c}
+ [\mathbf{\Lambda}]_{i_c}^{k}\right\Vert^{2}_{F}
\end{aligned}
\\
&\;\;\;\text{s.t.}
&& [Z_{AB}]_{i_c}[\mathbf{\Psi}]_{i_c} = [I]_{i_c}
\end{aligned}\right\}
\label{eqn:robust_DLMPC_local-column}
\\[5pt]
& [\mathbf{\Lambda}]_{i_c}^{k}  =  [\mathbf{\Lambda}]_{i_c}^{k}+
[\tilde{\mathbf{\Phi}}]_{i_c}^{k+1} - [\tilde{H}]_{i_c}[\tilde{\mathbf\Psi}]_{i_c}^{k+1}, \label{eqn:robust_DLMPC_local-lagrange}
\end{align}
\end{subequations}
where to simplify notation we denote as $[\mathbf\Phi]_{i_r}$ the submatrix of $\mathbf\Phi$ formed by the nonzero components of the relevant rows for subsystem $i$, $\mathfrak{r}_i$, and as $[\mathbf\Phi]_{i_c}$ the submatrix of $\mathbf\Phi$ formed by the nonzero components of the relevant columns for subsystem $i$, $\mathfrak{c}_i$. We use a similar bracket notation for the vectors and matrices that multiply the decision variables to indicate that those are just composed from the relevant components of their global versions.

The following lemma will be useful later for speeding up the ADMM iterates.
\begin{lemma}\label{lem:closed-form}
Let $z^{\star}(M,v,P,q) := \argmin\limits_{z} \left\Vert Mz-v \right\Vert_2^2$ s.t. $Pz=q$. Then
$$\begin{bmatrix} z^{\star} \\ \mu^{\star} \end{bmatrix} = \begin{bmatrix}MM^\intercal & P^\intercal \\ P & 0\end{bmatrix}^\dagger \begin{bmatrix} M^\intercal v \\ q \end{bmatrix},$$
where  $\dagger$ denotes pseudo-inverse, is the optimal solution and $\mu^{\star}$ is the corresponding optimal Lagrange multiplier.
\end{lemma}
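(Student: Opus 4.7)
My plan is to derive the formula by writing down the KKT conditions for the equality-constrained least-squares problem, assembling them into a single linear system, and then invoking a standard property of the Moore--Penrose pseudo-inverse.

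First I would form the Lagrangian $L(z,\mu) = \|Mz - v\|_2^2 + \mu^\intercal(Pz - q)$. Because the objective is convex quadratic and the constraint is affine, the KKT conditions are both necessary and sufficient for optimality whenever the problem is feasible. Stationarity in $z$ yields $M^\intercal M\, z^\star - M^\intercal v + \tfrac{1}{2} P^\intercal \mu^\star = 0$, and primal feasibility yields $P z^\star = q$. After absorbing the factor of $\tfrac{1}{2}$ into $\mu^\star$, these two conditions can be packed into the saddle-point system
\[
\begin{bmatrix} M^\intercal M & P^\intercal \\ P & 0 \end{bmatrix} \begin{bmatrix} z^\star \\ \mu^\star \end{bmatrix} = \begin{bmatrix} M^\intercal v \\ q \end{bmatrix}.
\]

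The next step is to verify that this linear system is consistent, i.e.\ that the right-hand side lies in the range of the block KKT matrix. Feasibility of the original problem together with boundedness of $\|Mz-v\|_2^2$ from below guarantees the existence of a primal-dual optimal pair, which is exactly a solution of the system. By a standard property of the pseudo-inverse, for any consistent linear system $Ax = b$ the vector $A^\dagger b$ is a solution (in fact, the minimum-norm one). Applying this fact to the KKT system yields the closed form in the statement, and by the sufficiency direction of KKT for convex equality-constrained QPs, the $z^\star$-component is a global minimizer with corresponding multiplier $\mu^\star$.

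The main subtlety is handling the case in which the KKT matrix is singular, which can happen when $M^\intercal M$ is rank-deficient along $\ker P$ (nonunique primal minimizer) or when $P$ lacks full row rank (nonunique multiplier). In both degenerate cases the primal-dual pair $(z^\star,\mu^\star)$ is not uniquely determined, but the pseudo-inverse still returns one admissible pair, which is all the lemma asserts; injectivity of the KKT map is not needed, only consistency of the linear system. Once consistency is established, the remaining manipulation is mechanical and amounts to reading off the block structure.
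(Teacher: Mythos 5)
Your proof is correct and follows essentially the same route as the paper's: write down the KKT stationarity and primal feasibility conditions and stack them into the saddle-point linear system solved via the pseudo-inverse. You are in fact slightly more careful than the paper, which does not justify why applying the pseudo-inverse to a possibly singular KKT matrix still returns a valid primal--dual pair; note also that your (correct) top-left block $M^\intercal M$ indicates that the $MM^\intercal$ appearing in the lemma statement is a typo.
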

\begin{proof}
The proof follows directly from applying the KKT conditions to the optimization problem. By the stationarity condition, $M^\intercal M z^{\star}-M^\intercal v + P^\intercal \mu^{\star} = 0$, where $z^{\star}$ is the solution to the optimization problem and $\mu^{\star}$ the optimal Lagrange multiplier vector. From the primal feasibility condition, $Pz^{\star} = q$. Hence, $z^{\star}$ and $\mu^{\star}$ are computed as the solution to this system of two equations.
\end{proof}

Applying lemma~\ref{lem:closed-form}, the solution to  \eqn{robust_DLMPC_local-column} is:
\begin{subequations}\label{eqn:closed_form}
\begin{align}
\hspace{-2mm}& [\mathbf{\Psi}\{1\}]_{i_c}^{k+1} = z^{*} \Bigg(
\begin{bmatrix} [I]_{i_c}\ \\ [H]_{i_c}\ \end{bmatrix}
\begin{bmatrix}
[\mathbf{\Phi}]_{i_c}+[\mathbf{\Lambda_1}]_{i_c}\\ [\mathbf{\Omega}]_{i_c}+[\mathbf{\Lambda_2}]_{i_c}
\end{bmatrix},
[Z_{AB}]_{i_c},
\begin{bmatrix} [I]_{i_c}\ \\ 0 \end{bmatrix}
\Bigg)\label{Psi_1}
\\
\hspace{-2mm}& [\mathbf{\Psi}\{2:T\}]_{i_c}^{k+1} =  z^{*} \Bigg(
[H]_{i_c}
[\mathbf{\Xi}G]_{i_c}+[\mathbf{\Lambda}_3]_{i_c},
[Z_{AB}]_{i_c},
\begin{bmatrix} 0 \\ [I]_{i_c} \end{bmatrix}
\Bigg) \label{Psi_rest}
\end{align}
\end{subequations}
where we split $\mathbf\Lambda$ into block matrices consistent with the structure of $\tilde{\mathbf\Phi}$, i.e. $\mathbf\Lambda = \begin{bmatrix}\mathbf{\Lambda_1} & 0 \\ \mathbf{\Lambda_2} & \mathbf{\Lambda_3}\end{bmatrix}.$

Notice that the number of nonzero components for both the rows and the columns is much smaller than the size of the network $N$ since it is determined by the size of the local neighborhood $d$ through the locality constraints. In turn, this implies that the subsystem only requires small submatrices from the plant matrices and the constraints $A,B,H,$ etc, to perform the computations. Therefore, the robust MPC subroutine \eqn{robust_DLMPC} can be solved in parallel in a distributed manner across the subsystems of the network, where each solves for a local patch of the system responses using local information only. In \alg{rDLMPC} we present the rDLMPC algorithm that each sub-controller executes.
\vspace{-2mm}
\setlength{\textfloatsep}{0pt}
\begin{algorithm}[ht]
\caption{Subsystem $i$ rDLMPC implementation}\label{alg:rDLMPC}
\begin{algorithmic}[1]
\Statex \textbf{input:} $\epsilon_p, \epsilon_d, \mu_1, \mu_2, \tau, \rho_\text{max}>0$.
\State Measure local state $[x(t)]_{i}$, $k\leftarrow0$.
\State Share the measurement with neighbors in $\textbf{out}_{i}(d)$.
\State Solve optimization problem \eqn{robust_DLMPC_local-row}.
\State Share $[\tilde{\mathbf{\Phi}}]_{i_{r}}^{k+1}$ with $\textbf{out}_{i}(d)$. Receive the corresponding $[\tilde{\mathbf{\Phi}}]_{j_{r}}^{k+1}$ from $\textbf{in}_{i}(d)$ and build $[\tilde{\mathbf{\Phi}}]_{i_{c}}^{k+1}$.
\State Solve optimization problem \eqn{robust_DLMPC_local-column} via the closed form solution \eqn{closed_form}.
\State Share $[\tilde{\mathbf{\Psi}}]_{i_{c}}^{k+1}$ with $\textbf{out}_{i}(d)$. Receive the corresponding $[\tilde{\mathbf{\Psi}}]_{j_{c}}^{k+1}$ from $\textbf{in}_{i}(d)$ and build $[\tilde{\mathbf{\Psi}}]_{i_{r}}^{k+1}$.
\State Perform the multiplier update step \eqn{robust_DLMPC_local-lagrange}.
\State \textbf{if} {$\left\Vert[\tilde{\mathbf{\Phi}}]_{i_{r}}^{k+1}-[\tilde{H}]_{i_{r}}[\tilde{\mathbf{\Psi}}]_{i_{r}}^{k+1}\right\Vert_F\leq\epsilon_{p}$
\Statex  and $\left\Vert[{\mathbf{\Psi}}]_{i_{r}}^{k+1}-[{\mathbf{\Psi}}]_{i_{r}}^{k}\right\Vert_F\leq\epsilon_{d}$}\textbf{:}
\Statex $\;\;$ Apply control action $[u_0]_i = [\Phi_{u,0}[0]]_{i_{r}}[x_0]_{\mathfrak{r_i}}$, and return to step 1.
\Statex \textbf{else:}
\Statex $\;\;$ Set $k\leftarrow k+1$, update $\rho$ according to \eqn{rho}, and return to step 3.
\end{algorithmic}
\end{algorithm}
\vspace{-4mm}
\subsection{Convergence and Complexity Analysis}

\subsubsection{Complexity Analysis}
We focus on the computational overhead since communication (as described by steps 2, 4 and 6, of \alg{rDLMPC}) is limited to information exchange between $d$-local neighbors. Steps 3, 5 and 7 dominate the computational complexity of the algorithm. The optimization problems in step 5 has a closed form solution (c.f. \lem{closed-form}), and step 7 requires only simple matrix operations. As a result, the computational overhead is significantly reduced. In particular, their evaluation reduces to addition and multiplication of matrices of dimension $O(d^2T^2)$, independent of the global size of the network $N$. Step 3 has to be computed by means of an optimization solver, since no closed-form solution exists and finding an explicit solution is no longer efficient (in contrast to  the nominal case  \cite{amoalonso2020explicit}) given the structure of the robust constraints. However, the size of the problem remains much smaller than the size of the network, specially when $d\ll N$. Each local row-wise subproblem optimizes over $O(d^2T^2)$ optimization variables subject to $O(dT)$ constraints. Note that in the nominal case \cite{amoalonso2020distributed}, complexity of the DLMPC algorithm scales with order $O(d^2T)$. The additional $T$ factor is due to the fact that in the noise-free setting we only work with the first block-column of the system responses, whereas in the robust case all the matrices composing the system responses become relevant. As in the nominal case, both the local radius $d$ and the time horizon $T$ are independent design parameters.

\subsubsection{Convergence Analysis}
Convergence of \alg{rDLMPC} follows directly from convergence results from ADMM \cite{boyd2010distributed} which require
Assumption \ref{assump: saddle point} and strong duality, which follows directly since all constraint sets are polytopes.

\subsubsection{Recursive Feasibility and Stability}
As is standard, recursive feasibility and stability can be guaranteed by setting a robust invariant set as the terminal set and a global Lyapunov function as the terminal cost \cite{borrelli2016model, lofberg2003minimax}.  We leave finding methods for efficiently computing decoupled robust invariant sets and terminal costs satisfying Assumption \ref{assump: locality} by exploiting locality to future work.

\section{Simulation Experiments}\label{sec:simulations}

We evaluate performance and computational overhead of \alg{rDLMPC}. We choose a chain system composed of $N$ subsystems where each has one scalar state. The local dynamics are given by:
\begin{equation}\label{eqn:simulation}
[x(t + 1)]_i = \alpha \big([x(t)]_i + \sum_{j=i\pm1}\kappa [x(t)]_j \big) + \beta_i[u(t)]_i + [w(t)]_i,
\end{equation}
where $\alpha=0.8$, $\kappa=2$, $\beta_i=1$ for $i=\{1,3,5,6,8,10\}$ and $\beta_i = 0$ for $i=\{2,4,7,9\}$ (notice the symmetry in the chain). We set random uniform noise to $\left\Vert [w]_i \right\Vert \leq 1$ for all $i$. The system is subject to constraints $[x^{min}(t)]_i \leq [x(t)]_i \leq [x^{max}(t)]_i $ for all $t$. Simulations are performed with SLS toolbox \cite{slstoolbox} and code to reproduce the experiments is at \url{https://github.com/unstable-zeros/dl-mpc-sls}. Standard values for the ADMM parameters are $\tau=1.5$, $\mu_1=\mu_2=10$, $\rho_{\text{max}}=5$. Some variation exists across different scenarios, values can be found in the code.

\subsection{Performance}

We analyze the trajectory obtained when computing the control input with \alg{rDLMPC} versus when it is computed with the nominal DLMPC Algorithm in \cite{amoalonso2020distributed}. We choose $N=10$, $d=3$, $T=5$, and we simulate the system over a time horizon of $20$ time steps. We set $[x^{max}(t)]_i = 1.5$ and $[x^{min}(t)]_i = -1.5$ for all nodes with actuation, i.e., $\beta_i=1$, and  $[x^{max}(t)]_i = 20$ and $[x^{min}(t)]_5  = -20$ for all other nodes. We run simulations for $5$ different random noise realizations, and plot a sample resulting trajectory for subsystem $3$ in \fig{dynamics}. In the presence of disturbance, the state of the subsystem remains within the bounds for all times when the control input is computed via \alg{rDLMPC}, whereas it significantly violates the bounds when the control input is computed via the nominal scheme of DLMPC \cite{amoalonso2020distributed}. Moreover, when no disturbance is present both algorithms provide the same result, which illustrate that \alg{rDLMPC} is a generalization of \cite{amoalonso2020distributed} in the case of affine constraints. The algorithm converges to the same value as CVX in all cases.

\begin{figure}
\centering
\includegraphics[scale=.54]{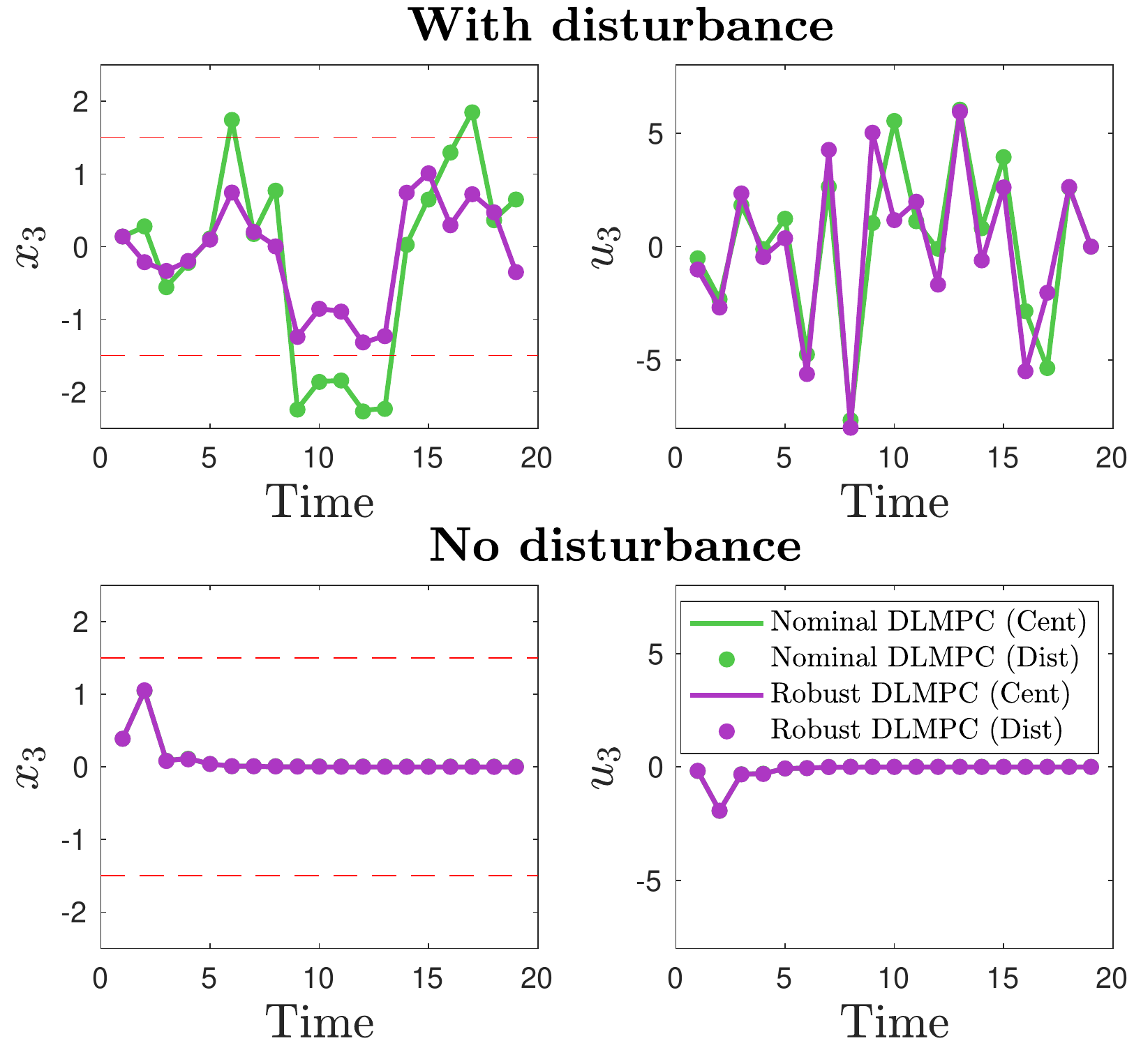}
\caption{Comparison of state and input $3$ when controlled with nominal versus robust DLMPC, in the presence of noise (top) and without noise (bottom).}
\label{fig:dynamics}
\end{figure}

We also investigate the conservativeness of \alg{rDLMPC} through a performance comparison with norminal DLMPC \cite{amoalonso2020distributed}. We present in \tab{cvl} the different values of the average cost (and standard deviation) after running for the $5$ different disturbance realizations. We observe that the cost relative difference between the two algorithms is around $30\%$. Generally, rDLMPC requires a more aggressive control action at the beginning, which leads to higher costs. But since in cases where the disturbance leads to constraint violations, the state in DLMPC reaches higher values and results in additional control actions to correct the deviations. The resulting performance of the rDLMPC algorithm is comparable to its nominal counterpart.

\begin{table}
\centering
\caption{Average cost and standard deviation of DLMPC in \eqn{simulation}. }
\label{tab:cvl}
\renewcommand{\arraystretch}{1.25}
\begin{tabular}{|
@{}>{\centering}m{0.245\columnwidth}@{}|
@{}>{\centering}m{0.245\columnwidth}@{}|
@{}>{\centering}m{0.245\columnwidth}@{}|
@{}>{\centering}m{0.245\columnwidth}@{}|}
\hline
\cline{1-4}
Nominal MPC &  Nominal MPC & Robust MPC & Robust MPC \tabularnewline
(Alg. I in \cite{amoalonso2020distributed}) & (via CVX) & (Alg. I ) & (\eqn{robust_DLMPC} via CVX) \tabularnewline
\hline
\hline
$1344^{*}\pm160$ & $1343^{*}\pm160$ & $1804\pm314$ & $1807\pm323$ \tabularnewline \hline

\end{tabular}
\newline
\\ $*$ denotes constraint violation
\end{table}

\subsection{Computational overhead}

We analyze  \alg{rDLMPC}, and  compare it with the overhead of its nominal counterpart Algorithm I in \cite{amoalonso2020distributed}, both when run with the explicit solution \cite{amoalonso2020explicit} as well as when an optimization solver is needed. We perform simulations with varying parameters, and we analyze how runtime is affected. In particular, we choose a range of values for the size of the network $N$ as well as for the size of the locality region $d$.
Simulation results are presented in \fig{runtime}. As predicted by the complexity analysis in \sec{robust_dlmpc}, runtime is not dominated by the size of the network and it does increase with the size of the locality region. The slight increase observed with the size of the network stabilizes for large network sizes, and this is a feature of ADMM previously observed in \cite{conte2012computational}. The increase with the size of the network is more apparent than in the nominal case because in that case complexity was $O(d^2T)$ while in the robust case we have $O(d^2T^2)$ due to the need for additional optimization variables. Therefore any increase in $d$ will be magnified by an additional factor of $T$.

When comparing runtime values for the different schemes, the nominal approach is more efficient since runtime is faster by approximately two orders of magnitude. However, part of this efficiency is due to the use of an explicit solution, which as demonstrated in \cite{amoalonso2020explicit} provides a significant reduction in runtime. When using the Gurobi solver \cite{gurobi} for the nominal approach, runtimes are faster than the ones of rDLMPC by one order of magnitude. This indicates that the fact that the rDLMPC has significantly more variables than its nominal counterpart accounts for a 10-fold runtime increase, where the other 10-fold is just due to the fact that no explicit solution is being used in the rDLMPC computation.

\begin{figure}
\centering
\includegraphics[scale=0.48]{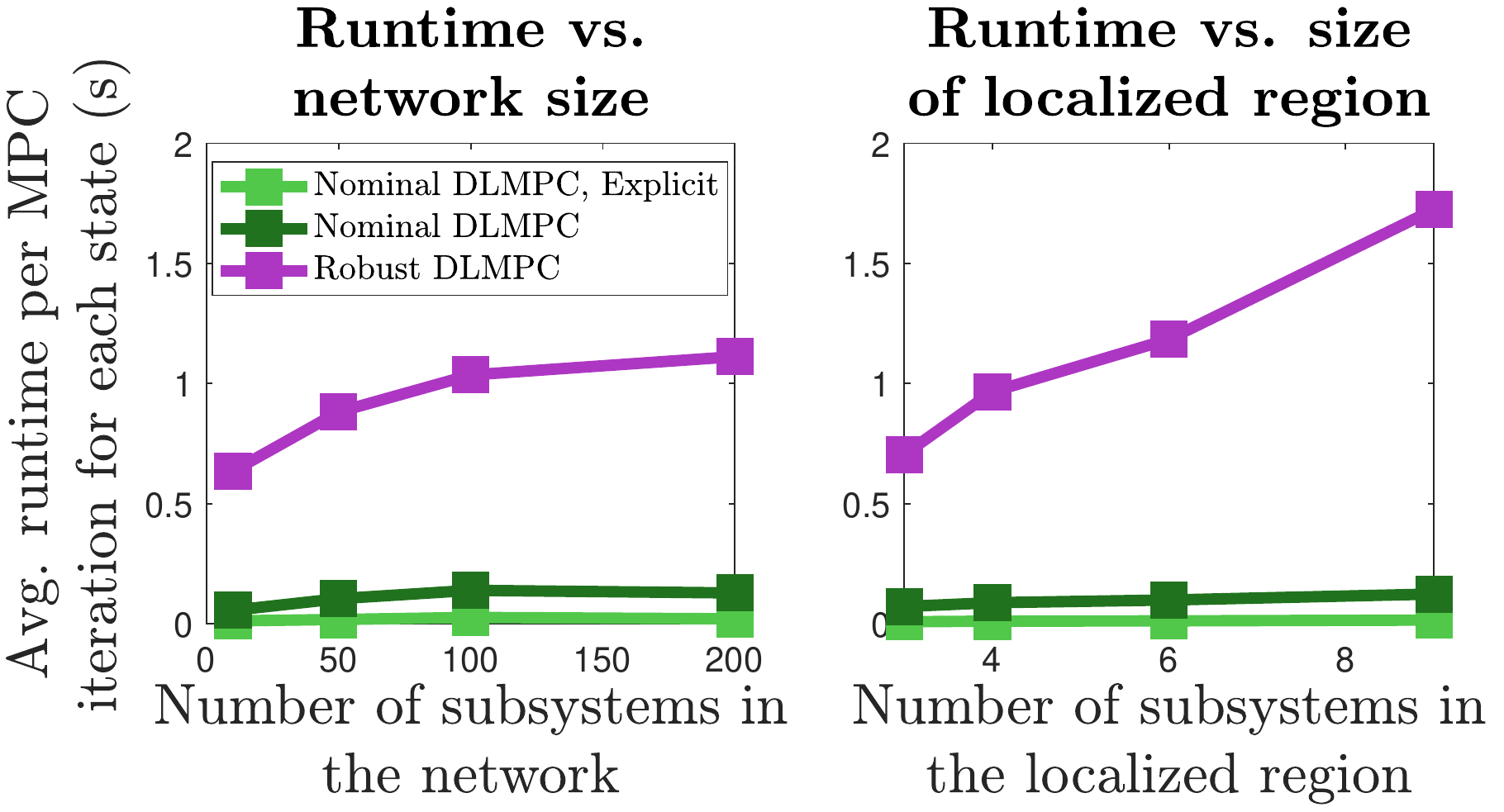}
\caption{On the left, runtime of each case for different network sizes $N$. On the right, the runtime of each case for different localized region sizes $d$.}
\label{fig:runtime}
\end{figure}

\section{Conclusion and Future Directions}\label{sec:conclusion}

We provide a robust closed-loop MPC approach that is distributed and localized in both  synthesis and implementation.
Complexity of the algorithm is dominated by the size of the local region, and we corroborate through simulation that this approach is suitable for large-scale networks. To be best of our knowledge, this is the first DMPC algorithm that allows for the distributed synthesis of robust closed-loop policies. Future work will investigate exploiting locality for the scalable distributed computation of robust invariant sets and terminal cost functions.

\bibliographystyle{IEEEtran}
\bibliography{Test}

\end{document}